\renewcommand{\@seccntformat}[1]{\bf\csname the#1\endcsname.}
\renewcommand{\section}{\@startsection{section}{1}
	\z@{.7\linespacing\@plus\linespacing}{.5\linespacing}
	{\normalfont\upshape\bfseries\centering}}
\renewcommand{\@biblabel}[1]{\@ifnotempty{#1}{#1.}}
\theoremstyle{plain}
\newtheorem{thm}{Theorem}[section]
\newtheorem{lem}[thm]{Lemma}
\newtheorem{prop}[thm]{Proposition}
\newtheorem{cor}[thm]{Corollary}
\theoremstyle{definition}
\newtheorem{defn}[thm]{Definition}
\setlist{nosep}
\def\T{{\mathcal T}}
\def \>{\succ}
\def \<{\prec}
\def\z{{\mathcal Z}}
\def\bbbf{{\mathbb F}}
\def\H{\mathcal{H}}
\def\vv{_{\vdash}}
\def\dd{_{\dashv}}
\def\pp{_{\perp}}
\DeclareMathOperator{\Trias}{Trias}
\DeclareMathOperator{\Hom}{Hom}
\DeclareMathOperator{\BHA}{BHA}
\DeclareMathOperator{\BHT}{BHT}
\begin{document}
\title[Erik Mainellis\textsuperscript{1}, Bouzid Mosbahi \textsuperscript{2*}, Ahmed Zahari  \textsuperscript{3*}]{ Cohomology of BiHom-Associative Trialgebras}
	\author{Erik Mainellis\textsuperscript{1},  Bouzid Mosbahi \textsuperscript{2*}, Ahmed Zahari Abdou \textsuperscript{3*}}
\address{\textsuperscript{1} Department of Mathematics, Statistics, and Computer Science, St. Olaf College}
   	\address{\textsuperscript{2*} Department of Mathematics, Faculty of Sciences, University of Sfax, Sfax, Tunisia}
    \address{\textsuperscript{3*}
IRIMAS-Department of Mathematics, Faculty of Sciences, University of Haute Alsace, Mulhouse, France}
		
\email{\textsuperscript{1}mainel1@stolaf.edu}
	\email{\textsuperscript{2*}mosbahi.bouzid.etud@fss.usf.tn}
         \email{\textsuperscript{3*}abdou-damdji.ahmed-zahari@uha.fr}
 \keywords{ BiHom-Associative Trialgebra, Cohomology, Central extension, Cocycle, Formal deformation, Generalized $\alpha\beta$-derivations}
	\subjclass[2010]{17A30, 17A32, 16D20, 16W25, 17B63}
	\date{\today}
	\thanks{}
 
	\begin{abstract}  
The paper concerns the cohomology of (multiplicative) BiHom-associative trialgebras. We first detail the correspondence between central extensions and second cohomology. This is followed by a general cohomology theory that unifies those of BiHom-associative algebras and associative trialgebras. Finally, we introduce one-parameter formal deformations and classify generalized $\alpha\beta$-derivations of 3-dimensional BiHom-associative trialgebras.
\end{abstract}\footnote{}
\maketitle

\forestset{
  dot tree/.style={
    /tikz/>=Latex,
    for tree={
      inner sep=0pt,
      fill,
      draw,
      circle,
      calign angle=90,
    },
    baseline,
    before computing xy={
      where n children>=4{
        tempcounta/.option=n children,
        tempdima/.option=!1.s,
        tempdimb/.option=!l.s,
        tempdimb-/.register=tempdima,
        tempdimc/.process={RRw2+P {tempcounta}{tempdimb}{##2/(##1-1)}},
        for children={
          if={>On>OR<&{n}{1}{n}{tempcounta}}{
            s/.register=tempdima,
            s+/.process={ORw2+P  {n} {tempdimc} {(##1-1)*##2} }
          }{},
        },
      }{},
    },
  },
  dot tree spread/.style={
    dot tree,
    for tree={fit=rectangle},
  },
  add arrow/.style={
    tikz+={
      \draw [thick, blue!15!gray]  (current bounding box.east) ++(2.5mm,0) edge [->] ++(10mm,0) ++(2.5mm,0) coordinate (o);}}}
      
\section{ Introduction}\label{introduction}

In \cite{L}, Loday and Ronco showed the family of chain modules over the standard simplices gives rise to an operad structure. The algebras over this operad are called \textit{associative trialgebras}, or \textit{triassociative algebras}, and are characterized by three operations and eleven defining relations. They generalize associative algebras and associative dialgebras (the latter were introduced by Loday in \cite{dialgebras}).
Nilpotent associative trialgebras were developed in \cite{BGM}, and their cohomology was studied in \cite{Y}. In another direction, notions of \textit{BiHom} structures have also been considered as generalizations of various algebraic categories.
In \cite{Cal} and \cite{Cheng}, for example, BiHom-Lie algebras were considered.
In \cite{Zah}, BiHom-associative dialgebras were considered. In \cite{D}, the author developed the cohomology of BiHom-associative algebras, while Hom-trialgebras were studied in \cite{BZI}. The implications of cohomology theories are far-reaching and well-known. One of the major themes is extension theory, with applications that include obtaining exact sequences, establishing dimension bounds, and classifying nilpotent algebras (see \cite{batten mult,edal mult new,ME,mainellis di,Mainellis nilp,Mainellis,Mak,Rak}, for example).

In the present paper, we are concerned with the cohomology of \textit{BiHom-associative trialgebras}. These algebras were introduced in \cite{Zah2}, and generalize both associative trialgebras as well as BiHom-associative algebras. Their exact definition is as follows.

\begin{defn}\label{defn1}
A \textit{BiHom-associative trialgebra} $(T, \dashv, \vdash,\perp ,\alpha, \beta)$ is a vector space $T$ equipped with linear maps $\dashv, \vdash, \perp : T\otimes T \longrightarrow T$ and $\alpha,\beta : T\longrightarrow T$ that satisfy
\begin{eqnarray}
\alpha\circ\beta&=&\beta\circ\alpha,\label{eq1}\\ 
(x\dashv y)\dashv\beta(z)&=&\alpha(x)\dashv(y\dashv z)\label{eq4},\\
(x\dashv y)\dashv\beta(z)&=&\alpha(x)\dashv(y\vdash z)\\
(x\vdash y)\dashv\beta(z)&=&\alpha(x)\vdash(y\dashv z),\label{eq6}\\
(x\dashv y)\vdash\beta(z)&=&\alpha(x)\vdash(y\vdash z),\label{eq7}\\
(x\vdash y)\vdash\beta(z)&=&\alpha(x)\vdash(y\vdash z)\label{eq8},\\
(x\dashv y)\dashv\beta(z)&=&\alpha(x)\dashv(y\perp z),\label{eq5}\\
(x\perp y)\dashv\beta(z)&=&\alpha(x)\perp(y\dashv z),\label{eq9}\\
(x\dashv y)\perp\beta(z)&=&\alpha(x)\perp(y\vdash z),\label{eq10}\\
(x\vdash y)\perp\beta(z)&=&\alpha(x)\vdash(y\perp z)\label{eq11},\\
(x\perp y)\vdash\beta(z)&=&\alpha(x)\vdash (y\vdash z),\\
(x\perp y)\perp\beta(z)&=&\alpha(x)\perp(y\perp z)\label{eq12}
\end{eqnarray}
for all $x, y, z\in T.$ We call $\alpha$ and $\beta$ (in this order) the \textit{structure maps} of $T$.
\end{defn}

We note that, when $\alpha$ and $\beta$ are equal, the axioms of the preceding definition simplify to those of Hom-associative trialgebras (see \cite{BZI}). Furthermore, when $\alpha$ and $\beta$ are both equal to the identity map on $T$, the axioms reduce to the eleven axioms of associative trialgebras. Therefore, BiHom-associative trialgebras generalize Hom-associative trialgebras under $\alpha = \beta$, which generalize associative trialgebras under $\alpha = \text{id}_T$. Alternatively, in the case where $\dashv~=~ \vdash ~=~ \perp$, the axioms of BiHom-associative trialgebras collapse to those of a BiHom-associative algebra.

The paper is structured as follows. We first discuss an assortment of preliminary notions and introduce the concept of extensions for BiHom-associative trialgebras. In the usual way (see \cite{Mainellis}), the special case of central extensions gives rise to second cohomology, where 2-cocycles correspond to central extensions that are related by equivalence up to coboundary. We proceed to unify the cohomologies of Bi-Hom associative algebras and associative trialgebras by constructing a notion of Hochschild cohomology for BiHom-associative trialgebras.
We then develop one-parameter formal deformations and, using an existing classification result of low-dimensional BiHom-associative trialgebras (from \cite{Zah2}), we describe the generalized $\alpha\beta$-derivations of 3-dimensional BiHom-associative trialgebras. Throughout the paper, we work over a field $\bbbf$ and let $\ast$ range over $\{\vdash,\dashv,\perp\}$.

\section{ Preliminaries}

Consider two BiHom-associative trialgebras $(T_1,\dashv_1, \vdash_1,\perp_1, \alpha_1, \beta_1)$ and $(T_2, \dashv_2,\vdash_2,\perp_2, \alpha_2, \beta_2)$. A \textit{homomorphism} of BiHom-associative trialgebras is a linear map
$$\phi : T_1\rightarrow T_2$$ such that $\phi(x\ast_1 y) = \phi(x)\ast_2\phi(y)$ for all $x, y \in T$, $\alpha_2\circ \phi
=\phi\circ\alpha_1$, and $\beta_2\circ \phi=\phi\circ\beta_1$. An \textit{isomomorphism} is a bijective homomorphism. A BiHom-associative trialgebra is called \textit{multiplicative} if its structure maps are homomorphisms. For the main results in the paper, we will focus on multiplicative algebras.

\begin{defn} A \textit{BiHom-module} $(V,\alpha_V,\beta_V)$ consists of a vector space $V$ equipped with two linear maps $\alpha_V,\beta_V:V \rightarrow V$ such that $\alpha_V\circ \beta_V = \beta_V\circ \alpha_V$. Given a BiHom-associative trialgebra $(T\dashv, \vdash,\perp, \alpha, \beta)$, our BiHom-module $(V,\alpha_V,\beta_V)$ is called a \textit{BiHom-module of $T$} if there exist six linear maps of the form \begin{align*}
    \dashv:T\otimes V\rightarrow{} V && \vdash:T\otimes V\rightarrow{} V && \perp:T\otimes V\rightarrow V \\ \dashv:V\otimes T\rightarrow{} V && \vdash:V\otimes T\rightarrow{} V && \perp:V\otimes T\rightarrow V
\end{align*} that satisfy 3 axioms for each of the conditions (2) through (12) in Definition \ref{defn1} (in the same sense as the dialgebraic action in \cite{casas}) as well as axioms of the form \begin{align*}
    \alpha_V(x\ast v) = \alpha(x)\ast \alpha_V(v), && \beta_V(x\ast v) = \beta(x)\ast \beta_V(v), \\ \alpha_V(v\ast x) = \alpha_V(v)\ast \alpha(x), && \beta_V(v\ast x) = \beta_V(v)\ast \beta(x)
\end{align*} for $x\in T$, $v\in V$, and each symbol $\ast\in\{\dashv,\vdash,\perp\}$.
\end{defn}

We note that this definition generalizes those of analogous notions in \cite{Lar}. Furthermore, any BiHom-associative trialgebra $(T\dashv, \vdash,\perp, \alpha, \beta)$ is a BiHom-module of itself. A \textit{morphism} \[\phi : (V,\alpha_V,\beta_V)  \rightarrow (W,\alpha_W,\beta_W)\] of BiHom-modules is a linear map $\phi:V \rightarrow W$ such that $\phi\circ\alpha_V = \alpha_W\circ\phi$ and
$\phi\circ\beta_V = \beta_W\circ\phi$.

A \textit{subalgebra} $S$ of a BiHom-associative trialgebra $(T\dashv, \vdash,\perp, \alpha, \beta)$ is a subspace of $T$ such that all of $x\ast y$, $\alpha(x)$, and $\beta(x)$ fall in $S$ for any $x,y\in S$. A subalgebra $S$ of $T$ is called an \textit{ideal} if $x\ast y$, $y\ast x\in S$ for all $x\in S$ and $y\in T$. We note that the kernel $\ker(\phi)$ of any homomorphism $\phi:T\rightarrow T_2$ is an ideal since \begin{align*}
    \phi(\alpha(x)) = \alpha_2(\phi(x)) = 0, && \phi(\beta(x)) = \beta_2(\phi(x)) = 0
\end{align*} for any $x\in \ker(\phi)$. The \textit{center} $Z(T)$ of $T$ is the ideal consisting of all $z\in T$ such that $z\ast t = t\ast z = 0$ for all $t\in T$. Our algebra $T$ is called \textit{abelian} if $T=Z(T)$. In other words, every multiplication is zero.

\begin{defn}
Let $(M,\dashv_M,\vdash_M,\perp_M,\alpha_M,\beta_M)$, $(T,\dashv,\vdash,\perp,\alpha,\beta)$, and $(K,\dashv_K,\vdash_K,\perp_K,\alpha_K,\beta_K)$ be BiHom-associative trialgebras. We say that $K$ is an \textit{extension} of $M$ by $T$ if there exists an exact sequence \begin{align*}
 0\rightarrow M \xrightarrow{\iota}K \xrightarrow{\pi}T \rightarrow 0
\end{align*} of homomorphisms. For most of the paper, we adopt the common extension-theoretic theme of letting $\iota$ be the inclusion map and think of $M=\ker(\pi)$. An extension is called \textit{trivial} if there exists an ideal $I$ of $K$ that is complementary to $\ker(\pi)$, meaning $K = \ker(\pi) \oplus I$. An extension is called \textit{central} if $\ker(\pi)$ is contained within $Z(K)$.
\end{defn}

For any given pair of BiHom-associative trialgebras such as $M$ and $T$, multiple extensions of $M$ by $T$ may exist. To classify these extensions, the concept of equivalent extensions is introduced. In particular, two extensions
\begin{align*}
  0\rightarrow M \xrightarrow{\iota_1}K_1 \xrightarrow{\pi_1}T \rightarrow 0, && 0\rightarrow M \xrightarrow{\iota_2}K_2 \xrightarrow{\pi_2} T \rightarrow 0
\end{align*}
are called \textit{equivalent} if there exists an isomorphism $\phi: K_1 \rightarrow K_2$ such that $\phi \circ \iota_1 = \iota_2$ and $\pi_2 \circ \phi = \pi_1$, i.e. such that the diagram \[\begin{tikzcd}
0\arrow[r] &M\arrow[r,"\iota_1"] \arrow[,d] & K_1 \arrow[d,"\phi"] \arrow[r,"\pi_1"] & T\arrow[r] \arrow["\text{id}",d]&0\\
0\arrow[r] & M\arrow[r,"\iota_2"] & K_2\arrow[r,"\pi_2"] &T\arrow[r] &0
\end{tikzcd}\] commutes, where the unmarked map is $\phi|_M$.

\section{ Central Extensions and Second Cohomology}
In this section, we work over a multiplicative BiHom-associative trialgebra $(T, \dashv, \vdash,\perp, \alpha, \beta)$ and a BiHom-module $(V, \alpha_V, \beta_V)$ of $T$. We construct a correspondence between central extensions of $V$ by $T$ and the second cohomology group $\mathcal{H}^2(T,V)$.

\begin{defn} A \textit{2-cocycle} on $T$ with values in $V$ is a triple $(f\vv,f\dd,f\pp)$ of bilinear forms $f_*:T\times T\xrightarrow{} V$ such that \begin{align*}
    &f_*(\alpha \otimes \alpha) = \alpha_V \circ f_*\\
    &f_*(\beta \otimes \beta) = \beta_V \circ f_*\\
    & f\dd(x\dashv y,\beta(z)) = f\dd(\alpha(x),y\dashv z) \\
    & f\dd(x\dashv y,\beta(z)) = f\dd(\alpha(x),y\vdash z) \\
    &f\dd(x\vdash y,\beta(z)) = f\vv(\alpha(x),y\dashv z) \\
    &f\vv(x\dashv y,\beta(z)) = f\vv(\alpha(x),y\vdash z)
    \\
    &f\vv(x\vdash y,\beta(z)) = f\vv(\alpha(x),y\vdash z) \\
    & f\dd(x\dashv y,\beta(z)) = f\dd(\alpha(x),y\perp z) \\
    & f\dd(x\perp y,\beta(z)) = f\pp(\alpha(x),y\dashv z) \\
    &f\pp(x\dashv y,\beta(z)) = f\pp(\alpha(x),y\vdash z) \\
    &f\pp(x\vdash y,\beta(z)) = f\vv(\alpha(x),y\perp z) \\
    &f\vv(x\perp y,\beta(z)) = f\vv(\alpha(x),y\vdash z) \\
    &f\pp(x\perp y,\beta(z)) = f\pp(\alpha(x),y\perp z)
\end{align*} for all $x,y,z\in T$.
\end{defn}

It is nice to visualize the first axiom as a commutative diagram of the following form: \[\begin{tikzcd}
T\times T \arrow[d,swap,"\alpha\otimes \alpha"] \arrow[r,"f_*"] & V\arrow[d,"\alpha_V"]\\
T\times T \arrow[r,"f_*"] &V
\end{tikzcd}\] We let $\z^2(T,V)$ denote the set of all 2-cocycles on $T$ with values in $V$, which forms a vector space under the operations $(F\oplus G)(x,y) = F(x,y) + G(x,y)$ and $(\lambda F)(x) = \lambda F(x)$ for $F = (f\vv,f\dd,f\pp)$, $G = (g\vv,g\dd,g\pp)$, and $\lambda\in \bbbf$.
In the following lemma, we work with a morphism of BiHom-modules \[\mu:(T,\alpha,\beta)\rightarrow (V,\alpha_V,\beta_V)\] and show that it always defines a 2-cocycle.

\begin{lem}
    Define three bilinear maps $f\dd,f\vv,f\pp:T\times T\xrightarrow{} V$ by $f\dd(x,y) = \mu(x\dashv y)$, $f\vv(x,y) = \mu(x\vdash y)$, and $f\pp(x,y) = \mu(x\perp y)$ for $x,y,z\in T$. Then $F=(f\vv,f\dd,f\pp)$ is a 2-cocycle on $T$.
\end{lem}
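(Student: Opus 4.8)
The plan is to verify directly that the triple $F = (f\vv, f\dd, f\pp)$ satisfies each of the defining axioms of a 2-cocycle, using only the fact that $\mu$ is a morphism of BiHom-modules (so $\mu \circ \alpha = \alpha_V \circ \mu$ and $\mu \circ \beta = \beta_V \circ \mu$) together with the defining identities \eqref{eq1}--\eqref{eq12} of the BiHom-associative trialgebra $T$ itself. The point is that $f_*$ is nothing but $\mu$ composed with one of the multiplications, so every cocycle axiom becomes the image under $\mu$ of a corresponding trialgebra axiom.

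\textbf{First axiom.} For the compatibility with structure maps, compute $f_*(\alpha(x), \alpha(y)) = \mu(\alpha(x) \ast \alpha(y)) = \mu(\alpha(x\ast y)) = \alpha_V(\mu(x\ast y)) = \alpha_V(f_*(x,y))$, where the second equality uses that $\alpha$ is an algebra homomorphism (here we use multiplicativity of $T$) and the third uses that $\mu$ intertwines $\alpha$ and $\alpha_V$. The identical argument with $\beta$ in place of $\alpha$ gives $f_*(\beta\otimes\beta) = \beta_V\circ f_*$. This handles the first two families of axioms for each $\ast \in \{\dashv,\vdash,\perp\}$.

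\textbf{Remaining axioms.} For a typical associativity-type axiom, say $f\dd(x\dashv y, \beta(z)) = f\dd(\alpha(x), y\dashv z)$, expand both sides: the left side is $\mu\big((x\dashv y)\dashv \beta(z)\big)$ and the right side is $\mu\big(\alpha(x) \dashv (y\dashv z)\big)$, and these agree because the two arguments of $\mu$ are equal by axiom \eqref{eq4}. Each of the remaining ten cocycle conditions is handled the same way: the condition mixing symbols, e.g. $f\dd(x\vdash y,\beta(z)) = f\vv(\alpha(x), y\dashv z)$, becomes $\mu\big((x\vdash y)\dashv\beta(z)\big) = \mu\big(\alpha(x)\vdash(y\dashv z)\big)$, which follows from \eqref{eq6}; and one matches each cocycle axiom to its trialgebra counterpart \eqref{eq4}--\eqref{eq12} by reading off which pair of symbols appears on the outside and inside. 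I would present this as a short table or as a sentence noting the bijection between the twelve displayed cocycle conditions and the twelve trialgebra axioms, applying $\mu$ throughout.

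There is no real obstacle here; the only thing to be careful about is bookkeeping—making sure each of the twelve cocycle equations is paired with the correct one of the trialgebra identities (the symbol patterns must line up exactly), and noting explicitly where multiplicativity of $T$ is invoked (only in the first axiom, to move $\alpha$ and $\beta$ across the multiplication). I would close by remarking that $F$ thus lies in $\z^2(T,V)$, which completes the proof.
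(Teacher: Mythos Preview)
Your proposal is correct and follows essentially the same approach as the paper's own proof: the paper verifies one compatibility axiom (for $\alpha$) and one associativity-type axiom (the one corresponding to \eqref{eq4}) explicitly and then declares the others ``proved in a similar way,'' which is exactly your strategy of unfolding each $f_*$ as $\mu$ composed with a multiplication and invoking the matching trialgebra identity. One small bookkeeping remark: there are eleven associativity-type cocycle conditions (matching the eleven trialgebra axioms (2)--(12)), not twelve as you write near the end.
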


\begin{proof}
We will prove two equalities, the others being proved in a similar way. For any $x, y, z \in T$, one has \begin{align*}
    f\dd((\alpha\otimes\alpha)(x,y)) &= f\dd(\alpha(x),\alpha(y)) \\ &= \mu(\alpha(x)\dashv\alpha(y)) \\ & = \mu(\alpha(x\dashv y)) \\ &= \alpha_V(\mu(x\dashv y)) \\ &= \alpha_V(f\dd(x,y))
\end{align*} and
\begin{align*}
f\dd(\alpha(x), y \dashv z) &= \mu(\alpha(x) \dashv (y \dashv z)) \\ &= \mu((x \dashv y) \dashv \beta(z))\\
&= f\dd(x\dashv y, \beta(z)).   
\end{align*}
This finishes the proof.
\end{proof}

A 2-cocycle that can be realized in this way is called a \textit{2-coboundary}. The set of all 2-coboundaries is denoted $\mathcal{B}^2(T,V)$ and forms a subgroup of $\z^2(T,V)$. We say that the quotient group \[\mathcal{H}^2(T,V) = \z^2(T,V)/\mathcal{B}^2(T,V)\] is the \textit{second cohomology group} of $T$ with values in $V$. Two cocycles $F$ and $G$ in $\z^2(T,V)$ are said to be \textit{cohomologous} if $F-G$ is a coboundary.

\begin{thm}
Let $f\dd, f\vv, f\pp:T\times T\xrightarrow{} V$ be bilinear maps and set $T_F= T\oplus V$, where $F=(f\dd,f\vv,f\pp)$. Define operations on $T_F$ by \begin{align*}
(x + u) \triangleleft (y + v) &= x \dashv y + f\dd(x, y),\\   
(x + u) \triangleright (y + v) &= x \vdash y + f\vv(x, y),\\
(x + u) \diamond (y + v) &= x \perp y + f\pp(x, y),
\end{align*} and \begin{align*}
(\alpha \oplus \alpha_V)(x + u) = \alpha(x) + \alpha_V(u), && (\beta \oplus \beta_V)(x + u) = \beta(x) + \beta_V(u)
\end{align*} for any $x, y \in T$ and $v,w \in V$. Then, $(T_F, \lhd, \rhd,\diamond, \alpha \oplus \alpha_V, \beta \oplus \beta_V)$ is a multiplicative BiHom-associative trialgebra if and only if $F$ is a 2-cocycle.
\end{thm}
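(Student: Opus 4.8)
The plan is to prove the equivalence by direct verification, exploiting the fact that all relevant structure on $T_F = T\oplus V$ splits into a "$T$-component" and a "$V$-component". First I would observe that, since $V$ is abelian in $T_F$ (the operations $\triangleleft,\triangleright,\diamond$ land in $T\oplus 0$ when restricted to products involving the summand $V$, and in fact $u\ast v = 0$ for $u,v\in V$ under this formula — note the forms $f_*$ only see the $T$-components $x,y$), the multiplication table of $T_F$ is completely determined by $\dashv,\vdash,\perp$ on $T$ together with the triple $F$. Then for each of the twelve axioms \eqref{eq1}–\eqref{eq12} of Definition \ref{defn1}, applied to elements $x+u, y+v, z+w \in T_F$, I would expand both sides and separate the $T$-component from the $V$-component.

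The key computation: for a typical axiom, say $(x\dashv y)\dashv\beta(z) = \alpha(x)\dashv(y\vdash z)$, evaluating in $T_F$ on $x+u,y+v,z+w$ gives on the left
\[
(x+u)\triangleleft(y+v)\triangleleft(\beta\oplus\beta_V)(z+w) = (x\dashv y)\dashv\beta(z) + f\dd(x\dashv y,\beta(z)),
\]
and on the right
\[
(\alpha\oplus\alpha_V)(x+u)\triangleleft\big((y+v)\triangleright(z+w)\big) = \alpha(x)\dashv(y\vdash z) + f\dd(\alpha(x), y\vdash z).
\]
The $T$-components agree automatically because $T$ is a BiHom-associative trialgebra; so the axiom holds in $T_F$ if and only if the $V$-components agree, which is exactly the corresponding cocycle equation $f\dd(x\dashv y,\beta(z)) = f\dd(\alpha(x),y\vdash z)$. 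Running through all twelve axioms produces exactly the twelve bilinear cocycle identities in the definition of a 2-cocycle. Separately, the commutation relation \eqref{eq1}, $(\alpha\oplus\alpha_V)\circ(\beta\oplus\beta_V) = (\beta\oplus\beta_V)\circ(\alpha\oplus\alpha_V)$, reduces to $\alpha\beta = \beta\alpha$ on $T$ (given) together with $\alpha_V\beta_V = \beta_V\alpha_V$ on $V$ (given, since $(V,\alpha_V,\beta_V)$ is a BiHom-module), so it holds freely. Finally, multiplicativity of $T_F$ — i.e. that $\alpha\oplus\alpha_V$ and $\beta\oplus\beta_V$ are homomorphisms — unravels to the conditions $\alpha(x\ast y)=\alpha(x)\ast\alpha(y)$ and $\beta(x\ast y)=\beta(x)\ast\beta(y)$ on $T$ (which hold since $T$ is multiplicative) plus $\alpha_V\circ f_* = f_*(\alpha\otimes\alpha)$ and $\beta_V\circ f_* = f_*(\beta\otimes\beta)$, which are precisely the first two cocycle axioms.

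I expect the main obstacle to be purely organizational rather than conceptual: there are twelve axioms, and for each one must carefully track which of the three operations $\triangleleft,\triangleright,\diamond$ appears on each side and hence which component $f\dd, f\vv, f\pp$ of $F$ is produced, so that the bookkeeping yields exactly the stated list of cocycle identities with no omissions or duplications. One should also double-check the edge cases involving the summand $V$: since $f_*(x,y)$ depends only on $x,y\in T$, every product in $T_F$ with at least one argument in $V$ contributes nothing to the $V$-component beyond what the $T$-parts give, so $V\subseteq Z(T_F)$ and no extra constraints arise. Given the lemma already proved, the "only if" and "if" directions are really the same computation read in two directions, so I would present it once as a chain of equivalences: $T_F$ is a multiplicative BiHom-associative trialgebra $\iff$ each of the structural identities holds $\iff$ each corresponding $V$-component identity holds $\iff$ $F\in\z^2(T,V)$.
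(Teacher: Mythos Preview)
Your proposal is correct and follows essentially the same direct-verification approach as the paper's proof, which displays only the computation for the first associativity axiom \eqref{eq4} and asserts the others are analogous. Your treatment is in fact more thorough, since you explicitly handle multiplicativity (yielding the compatibility conditions $\alpha_V\circ f_* = f_*\circ(\alpha\otimes\alpha)$ and $\beta_V\circ f_* = f_*\circ(\beta\otimes\beta)$) and the commutation of the structure maps; just note that there are eleven multiplicative axioms (2)--(12), not twelve, matching the eleven bilinear cocycle identities.
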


\begin{proof}
We compute \begin{align*}&((x + v) \lhd (y + w)) \lhd (\beta(z) + w) - (\alpha(x) + v) \lhd ((y + w) \lhd (z + w)) \\ & ~~~~~~~~ = ((x + v) \lhd (y + w)) \lhd (\beta(z) + w) - (\alpha(x) + v) \lhd ((y \dashv z) + f\dd (y, z)) \\ & ~~~~~~~~ = ((x \dashv y) \dashv \beta(z)) + f\dd(x \dashv y, \beta(z)) - (\alpha(x) \dashv (y \dashv z)) - f\dd(\alpha(x), y \dashv z)\end{align*}
for any $x, y, z \in T$ and $u, v,w \in V$. In the forward direction, the left hand vanishes by axiom (\ref{eq4}). The other axioms are proved analogously and the reverse direction holds by reverse logic.
\end{proof}

\begin{lem}
Let $F =(f\dd,f\vv,f\pp)$ be a 2-cocycle on $T$ and $B=(b\dd,b\vv,b\pp)$ a 2-coboundary. Then $T_{F+B}$ is a multiplicative BiHom-associative trialgebra under the multiplications \begin{align*}(x + u) \triangleleft (y + v) &= x \dashv y + b\dd(x, y) + f\dd(x, y),\\ (x + u) \triangleright (y + v) &= x \vdash y + b\vv(x, y) + f\vv(x, y),\\ (x + u) \diamond (y + v) &= x \perp y + b\pp(x, y) + f\pp(x, y),
\end{align*}
and structure maps $\alpha\oplus\alpha_V$ and $\beta\oplus\beta_V$ defined by
\begin{align*}(\alpha \oplus \alpha_V)(x + u) = \alpha(x) + \alpha_V(u), && (\beta \oplus \beta_V)(x + u) = \beta(x) + \beta_V(u)\end{align*} for $x, y \in T$ and $u, v \in V$. Moreover, $T_F\cong T_{F+B}$.
\end{lem}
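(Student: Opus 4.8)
The plan is to treat the two assertions in turn. Since $B$ is a 2-coboundary it belongs to $\mathcal{B}^2(T,V)\subseteq\z^2(T,V)$, and because $\z^2(T,V)$ is a vector space the sum $F+B$ is again a 2-cocycle. The operations and structure maps displayed in the statement are exactly those attached to $F+B$ by the preceding Theorem, so that theorem immediately gives that $(T_{F+B},\triangleleft,\triangleright,\diamond,\alpha\oplus\alpha_V,\beta\oplus\beta_V)$ is a multiplicative BiHom-associative trialgebra.

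For the isomorphism, I would use that, being a 2-coboundary, $B$ arises from a morphism of BiHom-modules $\mu:(T,\alpha,\beta)\to(V,\alpha_V,\beta_V)$ with $b\dd(x,y)=\mu(x\dashv y)$, $b\vv(x,y)=\mu(x\vdash y)$, and $b\pp(x,y)=\mu(x\perp y)$. I would then define $\phi:T_F\to T_{F+B}$ by $\phi(x+u)=x+u+\mu(x)$ for $x\in T$ and $u\in V$. This map is linear and bijective, with inverse $x+u\mapsto x+u-\mu(x)$, so it remains only to check that $\phi$ is a homomorphism of BiHom-associative trialgebras.

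To verify compatibility with the multiplications, I would compute, for each $\ast\in\{\dashv,\vdash,\perp\}$ with the corresponding operation on $T_F$, that $\phi$ carries $(x+u)\ast(y+v)=x\ast y+f_\ast(x,y)$ to $x\ast y+f_\ast(x,y)+\mu(x\ast y)=x\ast y+f_\ast(x,y)+b_\ast(x,y)$, which is precisely the product $(x+u+\mu(x))\ast(y+v+\mu(y))$ computed in $T_{F+B}$, i.e. $\phi(x+u)\ast\phi(y+v)$. For compatibility with the structure maps, I would use $\mu\circ\alpha=\alpha_V\circ\mu$ and $\mu\circ\beta=\beta_V\circ\mu$ to get $\phi((\alpha\oplus\alpha_V)(x+u))=\alpha(x)+\alpha_V(u)+\mu(\alpha(x))=\alpha(x)+\alpha_V(u)+\alpha_V(\mu(x))=(\alpha\oplus\alpha_V)(\phi(x+u))$, and analogously for $\beta\oplus\beta_V$.

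None of this is delicate; the only point that needs care is the sign in the definition of $\phi$: one must \emph{add} $\mu(x)$ rather than subtract it, so that the extra coboundary term $b_\ast(x,y)=\mu(x\ast y)$ present in $T_{F+B}$ is generated by $\phi$ instead of being cancelled. Everything else is a routine term-by-term comparison across the three multiplications and two structure maps, which I would not write out in full.
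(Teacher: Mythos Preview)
Your proposal is correct and follows essentially the same approach as the paper: the isomorphism $\phi(x+u)=x+\mu(x)+u$ and the verification that it respects the three multiplications and the two structure maps match the paper's argument line for line. The only difference is cosmetic: for the first assertion the paper re-does a direct axiom-by-axiom computation, whereas you observe that $F+B\in\z^2(T,V)$ and invoke the preceding theorem, which is cleaner and entirely legitimate.
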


\begin{proof}
To show that $T_{F+B}$ is a BiHom-associative trialgebra, we first compute
\begin{align*}&((x + u) \lhd (y + v)) \lhd \beta(z + w) - \alpha(x + u) \lhd ((y + v)) \lhd (z + w)) \\ &= (x \dashv y + b\dd(x, y) + f\dd(x, y)) \lhd (\beta(z) + \beta(w)) - (\alpha(x) + \alpha(u)) \lhd (y \dashv z + b\dd(y, z) + f\dd(y, z)) \\ &= (x \dashv y) \dashv \beta(z) + b\dd(x \dashv y, \beta(z)) + f\dd(x \dashv y, \beta(z)) - \alpha(x) \dashv (y \dashv z) - b\dd(\alpha(x), y \dashv z) + f\dd(\alpha(x), y \dashv z)\end{align*} for any $x +u$, $y + v$, and $z + w$ in $T \oplus V$.
The right hand side vanishes by (2) and the 2-cocycle identities. The rest of axioms follow similarly. Next, define $\phi:T_F \rightarrow T_{F+B}$ by $\phi(x + v) = x + \mu(x) + v$. It is clear that $\phi$ is a bijective linear map and that \begin{align*} \phi((\alpha \oplus \alpha_V)(x + v)) &= \phi(\alpha(x) + \alpha_V(v))\\
&= \alpha(x) + \mu(\alpha(x)) + \alpha_V(v)\\
&= \alpha(x) + \alpha_V(\mu(x)) + \alpha_V(v)\\
&= (\alpha \oplus \alpha_V)(x + \mu(x) + v)\\
&= (\alpha \oplus \alpha_V) \phi(x + v).    
\end{align*}
Thus, $\phi$ commutes $\alpha \oplus \alpha_V$, and similarly with $\beta \oplus \beta_V$. Then,
\begin{align*}
\phi((x + v) \lhd (y + w)) &= \phi(x \dashv y + f\dd(x, y))\\
&= \phi(x \dashv y) + \phi(f\dd(x, y))\\
&= x \dashv y + \mu(x \dashv y) + f\dd(x, y)\\
&= x \dashv y + b\dd(x, y) + f\dd(x, y)
\end{align*}
and
\begin{align*}
\phi(x + v) \lhd \phi(y + w) &= (x + \mu(x) + v) \lhd (y + \mu(y) + w)\\
&= x \dashv y + b\dd(x, y) + f\dd(x, y)
\end{align*} for all $x,y\in T$ and $v,w\in V$.
\end{proof}

\begin{cor}
Consider two cohomologous 2-cocycles $F$ and $G$ on $T$ and construct the respective central extensions $T_F$ and $T_G$. Then $T_F$ and $T_G$ are equivalent extensions. In particular, a central extension defined by a coboundary is equivalent to a trivial central extension.
\end{cor}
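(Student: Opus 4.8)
The plan is to exhibit, for two cohomologous $2$-cocycles $F$ and $G$, an explicit isomorphism $\phi\colon T_F\to T_G$ making the extension diagram commute. First I would write $F - G = B$, where $B=(b\dd,b\vv,b\pp)$ is a $2$-coboundary, so by definition there is a morphism of BiHom-modules $\mu\colon(T,\alpha,\beta)\to(V,\alpha_V,\beta_V)$ with $b_*(x,y)=\mu(x*y)$. Then $T_G = T_{F+(-B)}$, and the preceding lemma (applied with the coboundary $-B$, which is again a $2$-coboundary, realized by $-\mu$) gives a BiHom-associative trialgebra isomorphism $\phi\colon T_F\to T_{F+(-B)}=T_G$, namely $\phi(x+v)=x-\mu(x)+v$. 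The bulk of the work — checking $\phi$ respects the three multiplications and commutes with $\alpha\oplus\alpha_V$ and $\beta\oplus\beta_V$ — is already done in that lemma, so I would simply invoke it rather than recompute.

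Next I would verify the two commutativity conditions for equivalence of extensions. Recall that each central extension $T_H$ sits in the canonical sequence $0\to V\xrightarrow{\iota} T_H\xrightarrow{\pi} T\to 0$ with $\iota(v)=0+v$ and $\pi(x+v)=x$. For the left square, $\phi(\iota_F(v)) = \phi(0+v) = 0 - \mu(0) + v = 0 + v = \iota_G(v)$, using $\mu(0)=0$; hence $\phi\circ\iota_F=\iota_G$, and in particular $\phi|_V=\mathrm{id}_V$. For the right square, $\pi_G(\phi(x+v)) = \pi_G(x-\mu(x)+v) = x = \pi_F(x+v)$, so $\pi_G\circ\phi=\pi_F$. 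Together with the fact that $\phi$ is an isomorphism of BiHom-associative trialgebras, this is exactly the definition of equivalent extensions, so $T_F$ and $T_G$ are equivalent.

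Finally, for the ``in particular'' clause: if a central extension is defined by a coboundary $B$, then $B$ is cohomologous to the zero cocycle $0$, and $T_0 = T\oplus V$ with all multiplications given by $x*y$ (and no twist) and structure maps $\alpha\oplus\alpha_V$, $\beta\oplus\beta_V$. In $T_0$ the subspace $0\oplus V$ is a complementary ideal to $\ker(\pi)=0\oplus V$ — more precisely, $T_0 = (0\oplus V)\oplus(T\oplus 0)$ as a direct sum in which $T\oplus 0$ is a subalgebra, and since $V$ is central the summand $0\oplus V$ equals $\ker\pi$ and $T\oplus 0$ is the required complement, so $T_0$ is a trivial central extension. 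Applying the first part with $G=0$ yields $T_B$ equivalent to the trivial extension $T_0$. The only mild subtlety to watch is that ``trivial extension'' as defined earlier requires a complementary \emph{ideal}; here $T\oplus 0$ need not be an ideal of $T_0$ in general, so I would instead phrase triviality via the splitting $\pi$ (the section $x\mapsto x+0$ is an algebra homomorphism), which is the honest content of triviality for central extensions — I expect reconciling the stated definition of ``trivial'' with this is the one point needing a careful sentence rather than a calculation.
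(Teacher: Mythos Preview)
Your argument is essentially the intended one: the paper states this as an immediate corollary of the preceding lemma, with no separate proof, and your reduction to that lemma together with the check that $\phi\circ\iota_F=\iota_G$ and $\pi_G\circ\phi=\pi_F$ is exactly the natural way to fill in the details.

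One remark on the final paragraph: your worry about $T\oplus 0$ failing to be an ideal of $T_0$ is unfounded. When the cocycle is identically zero, the multiplications on $T_0$ are $(x+u)\ast(y+v)=x\ast y + 0$, so the product of any element with an element of $T\oplus 0$ lands in $T\oplus 0$, and $(\alpha\oplus\alpha_V)(x+0)=\alpha(x)+0$, $(\beta\oplus\beta_V)(x+0)=\beta(x)+0$ remain in $T\oplus 0$ as well. Hence $T\oplus 0$ is a genuine complementary ideal to $\ker\pi=0\oplus V$, and $T_0$ is trivial in precisely the sense defined in the paper; no reconciling sentence is needed.
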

\begin{thm}
There exists one to one correspondence between elements of $\mathcal{H}^2(T,V)$ and nonequivalent central
extensions of $T$ by $V$.
\end{thm}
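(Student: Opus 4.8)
The plan is to construct explicit maps in both directions between $\mathcal{H}^2(T,V)$ and the set of equivalence classes of central extensions of $T$ by $V$, and then show they are mutually inverse. First I would set up the forward map: given a cohomology class, pick a representative 2-cocycle $F = (f\dd, f\vv, f\pp) \in \z^2(T,V)$ and send it to the extension $T_F = T \oplus V$ from the earlier theorem, with the exact sequence $0 \to V \xrightarrow{\iota} T_F \xrightarrow{\pi} T \to 0$ where $\iota(v) = 0 + v$ and $\pi(x+v) = x$. One checks $\iota$ and $\pi$ are BiHom-associative trialgebra homomorphisms commuting with the structure maps, that the sequence is exact, and that the extension is central since for $v \in V$ and $x+u \in T_F$ we have $v \ast (x+u) = 0\ast x + f_\ast(0,x) = 0$ using bilinearity of $f_\ast$ (and similarly on the other side), so $\iota(V) \subseteq Z(T_F)$. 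The Corollary already establishes that cohomologous cocycles yield equivalent extensions, so this map is well-defined on $\mathcal{H}^2(T,V)$.

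Next I would build the reverse map. Given a central extension $0 \to V \xrightarrow{\iota} K \xrightarrow{\pi} T \to 0$, choose a linear section $s : T \to K$ of $\pi$ (so $\pi \circ s = \mathrm{id}_T$); we may arrange $s$ to be compatible with structure maps up to elements of $V$, but more carefully one should first note that $K = s(T) \oplus \iota(V)$ as vector spaces. Define $f_\ast(x,y) \in V$ by $\iota(f_\ast(x,y)) = s(x) \ast_K s(y) - s(x \ast y)$ for each $\ast$, and define $\alpha_V, \beta_V$ on $V$ via $\iota \circ \alpha_V = \alpha_K \circ \iota$ (which makes sense since $\iota(V) = \ker\pi$ is invariant under $\alpha_K$, as $\ker\pi$ is an ideal). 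The centrality of the extension is exactly what is needed to check that $F = (f\vv, f\dd, f\pp)$ satisfies the 2-cocycle identities: applying $\pi$ to the axioms of $K$ gives the axioms of $T$, while the "error terms" land in $V = \ker\pi$, and because $\iota(V)$ is central these error terms interact with the defining relations (\ref{eq4})--(\ref{eq12}) in precisely the way the 2-cocycle conditions demand; the compatibility $f_\ast(\alpha\otimes\alpha) = \alpha_V \circ f_\ast$ comes from multiplicativity of $K$ together with $\alpha_K \circ s - s \circ \alpha$ having image in $V$. Then I would send the extension to the class of $F$ in $\mathcal{H}^2(T,V)$ and verify this is independent of the choice of section $s$: a different section $s'$ differs by a linear map $\mu : T \to V$ (meaning $s' = s + \iota\mu$), and the resulting cocycle changes by the coboundary associated to $\mu$ as in the Lemma, so $\mu$ must be checked to be a morphism of BiHom-modules — this follows from $s'$ and $s$ both being "sections" and $\iota(V)$ being central, forcing $\mu \circ \alpha = \alpha_V \circ \mu$ etc. One also checks equivalent extensions give the same class: an equivalence $\phi : K_1 \to K_2$ carries a section of $\pi_1$ to a section of $\pi_2$ realizing the same cocycle.

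Finally I would verify the two maps are mutually inverse. Starting from a cocycle $F$, forming $T_F$, and using the obvious section $s(x) = x + 0$ recovers $f_\ast(x,y) = (x+0)\ast(y+0) - (x\ast y + 0) = f_\ast(x,y)$ on the nose, so one composite is the identity on $\mathcal{H}^2(T,V)$. Starting from a central extension $K$ with chosen section $s$ and cocycle $F$, the map $K \to T_F$ given by $k \mapsto \pi(k) + \iota^{-1}(k - s\pi(k))$ is a well-defined linear isomorphism (using $K = s(T)\oplus\iota(V)$), and one checks it is a homomorphism of BiHom-associative trialgebras commuting with structure maps and making the two extension sequences commute, hence $K$ is equivalent to $T_F$; thus the other composite is the identity on equivalence classes. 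The main obstacle I anticipate is the bookkeeping in the reverse map — specifically, verifying that the centrality hypothesis really does yield all twelve 2-cocycle identities (and the two structure-map compatibilities) from the twelve BiHom-associative trialgebra axioms of $K$, since each axiom must be pushed through $\pi$ and the $V$-valued remainder terms tracked carefully; this is conceptually routine but notationally heavy, and it is where the hypothesis "central" (rather than merely "abelian kernel") is used, since we need $\iota(V)$ to annihilate all products so that the cross terms $s(x)\ast\iota(v)$ vanish when expanding $s(x\ast y)\ast \beta_K(s(z))$ and its siblings.
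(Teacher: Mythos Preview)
Your plan is the standard construction and is exactly what the paper intends by deferring to \cite{Mainellis}; the paper gives no argument beyond that citation, so your outline is already more detailed than the paper's own proof.

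There is, however, one genuine gap in the reverse direction. When you build the cocycle from an arbitrary linear section $s:T\to K$, you claim that the compatibility $f_\ast(\alpha\otimes\alpha)=\alpha_V\circ f_\ast$ follows from multiplicativity of $K$ together with the fact that $\nu:=\alpha_K\circ s - s\circ\alpha$ lands in $\iota(V)$, and later that the linear isomorphism $K\to T_F$ commutes with the structure maps. Neither holds for a generic linear section: a direct computation (using centrality to kill the cross terms) gives
\[
f_\ast(\alpha(x),\alpha(y)) - \alpha_V\bigl(f_\ast(x,y)\bigr) \;=\; \iota^{-1}\bigl(\nu(x\ast y)\bigr),
\]
and your candidate equivalence $k\mapsto \pi(k) + \iota^{-1}(k - s\pi(k))$ intertwines $\alpha_K$ with $\alpha\oplus\alpha_V$ precisely when $s\circ\alpha = \alpha_K\circ s$ (and similarly for $\beta$). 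Thus you must choose $s$ to be a section \emph{in the category of BiHom-modules}, not merely of vector spaces. Such a section need not exist: take $T=V=\bbbf$ with $\alpha=\beta=\alpha_V=\beta_V=0$ and all products zero, and $K=\bbbf^2$ with zero products and $\alpha_K=\beta_K=\left(\begin{smallmatrix}0&1\\0&0\end{smallmatrix}\right)$; this $K$ is a central extension admitting no BiHom-module section and not equivalent to any $T_F$. The statement therefore requires either the tacit hypothesis that the extension splits as a short exact sequence of BiHom-modules, or additional assumptions on $\alpha,\beta$ guaranteeing such splittings. The paper does not isolate this hypothesis, but you should when writing the argument out.
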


\begin{proof}
The proof is similar to the usual theory that was formalized for various classes of algebras in \cite{Mainellis}.
\end{proof}

\section{ General Cohomology}
In this section, we describe a cohomology of BiHom-associative trialgebras that unifies the cases of BiHom-associative algebras \cite{D} and associative trialgebras \cite{Y}. We will first recall these specific notions.

\subsection{~BiHom-associative Algebras}
Consider a multiplicative BiHom-associative algebra $(T,\alpha, \beta)$ and let $\mathcal{C}^n_{\BHA}(T,T)$ denote the vector space consisting of all linear maps $f : T^{\otimes n} \rightarrow T$ that satisfy $\alpha\circ f=f\circ \alpha^{\otimes n}$ and $\beta\circ f=f\circ \beta^{\otimes n}$ for $n \geq 1$. Define a coboundary map \[\delta_{\BHA}^n:\mathcal{C}^{n}_{\BHA}(T,T)\rightarrow \mathcal{C}^{n+1}_{\BHA}(T,T)\] by
\begin{align*}(\delta_{\BHA}^nf)(x_{1},...,x_{n+1}) &=\alpha^{n-1}(x_{1})f(x_{2},...,x_{n+1}) \\ & \hspace{.5cm} +\sum_{i=1}^{n}(-1)^{i}f(\alpha(x_{1}),...,\alpha(x_{i-1}),x_{i}x_{i+1},\beta(x_{i+2}),...,\beta(x_{n+1})) \\ & \hspace{.5cm} +(-1)^{n+1}f(x_{1},...,x_{n})\beta^{n-1}(x_{n+1})\end{align*} for any $f \in \mathcal{C}^n_{\BHA}(T,T)$ and $x_{1}, x_{2},..., x_{n+1} \in T$.
The cohomology of this complex is denoted by $\mathcal{H}_{\BHA}^n(T,T)$ and generalizes the Hochschild cohomology of both Hom-associative and associative algebras. In \cite{D}, the author uses this cohomology with self coefficients to define a cohomology with coefficients in a bimodule. Indeed, begin with the cochain groups \[\mathcal{C}_{\BHA}^n(T,V) = \{f:T^{\otimes n}\xrightarrow{} V ~|~ \alpha_V\circ f = f\circ\alpha^{\otimes n},~ \beta_V\circ f = f\circ \beta^{\otimes n}\}\] on $T$ with values in a $T$-bimodule $(V,\alpha_V,\beta_V)$.
An $n$-cochain $f$ is then extended to a map \[\Tilde{f}\in \mathcal{C}_{\BHA}^n(V\rtimes T, V\rtimes T)\] defined by \[\Tilde{f}((v_1,x_1),\dots,(v_n,x_n)) = (f(x_1,\dots,x_n),0)\] for $x_1,\dots,x_n\in T$ and $v_1,\dots,v_n\in V$.
The desired coboundary map is defined by restricting the resulting $(n+1)$-cochain to $A^{\otimes (n+1)}$. In other words, we have \[\delta_{\BHA}^n:\mathcal{C}^{n}_{\BHA}(T,V)\rightarrow \mathcal{C}^{n+1}_{\BHA}(T,V)\] by \[\delta_{\BHA}^n(f) = (\delta_{\BHA}^n\Tilde{f})\Big|_{A^{\otimes (n+1)}}\] and obtain a cochain complex $(\mathcal{C}^{*}_{\BHA}(T,V),\delta_{\BHA}^*)$. Its cohomology is denoted by $\mathcal{H}_{\BHA}^n(T,V)$.

\subsection{~Associative Trialgebras} We now turn to associative trialgebras. To this end, we begin by discussing trees. Let $\T_n$ denote the set of rooted planar trees with $n+1$ leaves. We say that such trees have \textit{degree $n$}, denoted $|\psi| = n$ for $\psi\in \T_n$. In Section 2.2 of \cite{Y}, Yau describes the notion of \textit{orientation} for leaves as well as the operation of \textit{grafting} trees. In particular, we can graft a collection of trees $\psi_0,\psi_1,\dots,\psi_k$ by arranging them from left to right and connecting their roots to form a new tree, denoted $\psi_0\vee\psi_1\vee\cdots \vee\psi_k$, of degree $|\psi_0| + |\psi_2| + \cdots + |\psi_k| + k$. On the other hand, every tree $\psi\in \T_{n+1}$ can be written uniquely as such a grafting, for which the valence of the lowest internal vertex of $\psi$ is $k+1$. Yau goes on to define the \textit{cohomology of associative trialgebras} as follows.

Let $(T,\dashv,\vdash,\perp)$ be a associative trialgebra. The group of $n$-cochains on $T$ with values in a $T$-module $V$ is defined by \[\mathcal{C}_{\Trias}^n(T,V) = \Hom_{\bbbf}(\bbbf[\T_n]\otimes T^{\otimes n}, V)\] for $n\geq 0$. For coboundary maps, we begin by defining functions $d_i:\T_{n+1}\xrightarrow{} \T_n$ that take a tree $\psi$ and output a tree $d_i\psi$ that is obtained from $\psi$ by deleting its $i$th leaf. Now consider a tree $\psi\in \T_{n+1}$, written as its unique grafting $\psi = \psi_0\vee\psi_1\vee\cdots \vee\psi_k$, and define the functions $\circ_i:\T_{n+1}\xrightarrow{} \{\dashv,\vdash,\perp\}$ by \begin{align*}\circ_0(\psi) = \begin{cases}
    \dashv & \text{if } |\psi_0| = 0 \text{ and } k=1, \\ \vdash & \text{if } |\psi_0| >0, \\ \perp & \text{if } |\psi_0|=0 \text{ and } k>1
\end{cases} && \circ_{n+1}(\psi) = \begin{cases}
    \dashv & \text{if } |\psi_k| >0, \\ \vdash & \text{if } |\psi_1| =0 \text{ and } k=1, \\ \perp & \text{if } |\psi_k|=0 \text{ and } k>1
\end{cases}\end{align*} and, when $1\leq i\leq n$, by \[\circ_i(\psi) = \begin{cases}
    \dashv & \text{if the $i$th leaf of $\psi$ is left-oriented,}  \\ \vdash & \text{if the $i$th leaf of $\psi$ is right-oriented,} \\ \perp & \text{if the $i$th leaf of $\psi$ is a middle leaf}
\end{cases}\] for $0\leq i\leq n+1$. Denote $\circ_i^{\psi} := \circ_i(\psi)$. We use these functions to define another  series of functions \[\delta_i^n:\mathcal{C}^n_{\Trias}(T,V)\xrightarrow{} \mathcal{C}^{n+1}_{\Trias}(T,V)\] that take an $n$-cochain $f$ and define an $(n+1)$-cochain $\delta_i^nf$ by \[(\delta_i^nf)(\psi;x_1,x_2,\dots,x_{n+1}) = \begin{cases} x_1\circ_0^{\psi}f(d_0\psi;x_2,\dots,x_{n+1}) & \text{if $i=0$}, \\ f(d_i\psi;x_1,\dots,x_i\circ_i^{\psi}x_{i+1},\dots,x_{n+1}) & \text{if $1\leq i\leq n$}, \\ f(d_{n+1}\psi;x_1,\dots,x_n)\circ_{n+1}^{\psi}x_{n+1} & \text{if $i=n+1$}.
\end{cases}\] for $x_1,\dots,x_{n+1}\in T$ and $\psi\in\T_{n+1}$. The functions $\delta_i^n$ assemble into the desired coboundary maps \[\delta^n:\mathcal{C}^n_{\Trias}(T,V)\xrightarrow{} \mathcal{C}^{n+1}_{\Trias}(T,V)\] defined by \[\delta^n = \sum_{i=0}^{n+1}(-1)^i\delta_i^n\] that form a cochain complex $(\mathcal{C}^*_{\Trias}(T,V),\delta^*)$. The cohomology of this complex defines our desired cohomology $\mathcal{H}^n_{\Trias}(T,V)$ of $T$ with coefficients in $V$.

\subsection{~BiHom-associative Trialgebras} We are now ready to generalize these theories. Let $(T, \dashv, \vdash,\perp ,\alpha, \beta)$ be a multiplicative BiHom-associative trialgebra and $(V,\alpha_V,\beta_V)$ be a BiHom-module of $T$. For $n \geq 0$, let $\mathcal{C}^{n}(T,V)$ denote the group of \textit{$n$-cochains}, multilinear functions of the form \[f:\bbbf[\T_n]\otimes T^{\otimes n}\xrightarrow{} V\] that satisfy \begin{align*}
    (\alpha_V \circ f)(\psi; x_{1},...,x_{n}) =f(\psi; \alpha(x_{1}),...,\alpha(x_{n})), && (\beta_V \circ f)(\psi; x_{1},...,x_{n}) =f(\psi; \beta(x_{1}),...,\beta(x_{n}))
\end{align*} for $x_1,\dots,x_n\in T$ and $\psi\in \T_n$. Define a map \[\delta^n_{\BHT}:\mathcal{C}^{n}(T,V)\rightarrow \mathcal{C}^{n+1}(T,V)\]
by \begin{align*}(\delta^{n}_{\BHT}f)(\psi;x_{1},...,x_{n+1}) & =\alpha^{n-1}(x_{1})\circ^{\psi}_{0}f(d_{0}\psi;x_2,...,x_{n+1}) \\ & \hspace{.5cm} +\sum_{i=1}^{n}(-1)^{i}f(d_{i}\psi;\alpha(x_{1}),...,\alpha(x_{i-1}),x_{i}\circ^{\psi}_{i}x_{i+1},\beta(x_{i+2}),...,\beta(x_{n+1})) \\ & \hspace{.5cm} +(-1)^{n+1}f(d_{n+1}\psi;x_{1},...,x_{n})\circ^{\psi}_{n+1}\beta^{n-1}(x_{n+1}),\end{align*}
for $\psi \in \mathcal{T}_{n+1}$ and $x_1,...,x_{n+1} \in T$. From the work of \cite{Y} combined with the structure of BiHom-associative trialgebras, we obtain a cochain complex $(\mathcal{C}^{*}(T,V),\delta_{\BHT}^*)$ and denote the cohomology of this complex  by $\H^{n}(T,V)$, the \textit{Hochschild cohomology} of $T$ with coefficients in $V$.

\section{ Deformations}
In this section, we introduce a formal deformation theory for BiHom-associative trialgebras. As in the previous sections, let $(T,\dashv, \vdash,\perp, \alpha, \beta)$ be a multiplicative BiHom-associative trialgebra.

\begin{defn}
A \textit{1-parameter formal deformation} of $T$ is a tuple $(\dashv_t, \vdash_t, \perp_t)$ of $\bbbf[[t]]$-bilinear maps \[\dashv_t, \vdash_t,\perp_t: T[[t]]\times T[[t]] \rightarrow T[[t]]\] of the form \begin{align*}
    \dashv_t =\sum_{i \geq 0}t^i\dashv_i, && \vdash_t=\sum_{i \geq 0}t^i\vdash_i, && \perp_t=\sum_{i \geq 0}t^i\perp_i,
\end{align*} where each of $\dashv_i$, $\vdash_i$, $\perp_i$ is an $\bbbf$-bilinear map $T\times T\rightarrow T$, extended bilinearly to maps $T[[t]]\times T[[t]]\rightarrow T[[t]]$, $\dashv_0=\dashv$, $\vdash_0=\vdash$, $\perp_0=\perp$, and
\begin{align*}
\alpha\circ\beta &= \beta\circ\alpha,\\
(x\dashv_t y)\dashv_t\beta(z)&=\alpha(x)\dashv_t(y\dashv_t z),\\
(x\dashv_t y)\dashv_t\beta(z)&=\alpha(x)\dashv_t(y\vdash_t z),\\
(x\vdash_t y)\dashv_t\beta(z)&=\alpha(x)\vdash_t(y\dashv_t z),\\
(x\dashv_t y)\vdash_t\beta(z)&=\alpha(x)\vdash_t(y\vdash_t z),\\
(x\vdash_t y)\vdash_t\beta(z)&=\alpha(x)\vdash_t(y\vdash_t z),\\
(x\dashv_t y)\dashv_t\beta(z)&=\alpha(x)\dashv_t(y\perp_t z),\\
(x\perp_t y)\dashv_t\beta(z)&=\alpha(x)\perp_t(y\dashv_t z),\\
(x\dashv_t y)\perp_t\beta(z)&=\alpha(x)\perp_t(y\vdash_t z),\\
(x\vdash_t y)\perp_t\beta(z)&=\alpha(x)\vdash_t(y\perp_t z),\\
(x\perp_t y)\vdash_t\beta(z)&=\alpha(x)\vdash_t (y\vdash_t z),\\
(x\perp_t y)\perp_t\beta(z)&=\alpha(x)\perp_t(y\perp_t z)
\end{align*}
for all $x, y, z \in T$. These are equivalent to eleven infinite systems of equations. We detail them here.
\begin{align*}
\sum_{i+j=n}^{\infty}(x\dashv_j y)\dashv_i\beta(z)&=\sum_{i+j=n}^{\infty}\alpha(x)\dashv_i(y\dashv_j z),\\
\sum_{i+j=n}^{\infty}(x\dashv_j y)\dashv_i\beta(z)&=\sum_{i+j=n}^{\infty}\alpha(x)\dashv_i(y\vdash_j z),\\
\sum_{i+j=n}^{\infty}(x\vdash_j y)\dashv_i\beta(z)&=\sum_{i+j=n}^{\infty}\alpha(x)\vdash_i(y\dashv_j z),\\
\sum_{i+j=n}^{\infty}(x\dashv_j y)\vdash_i\beta(z)&=\sum_{i+j=n}^{\infty}\alpha(x)\vdash_i(y\vdash_j z),\\
\sum_{i+j=n}^{\infty}(x\vdash_j y)\vdash_i\beta(z)&=\sum_{i+j=n}^{\infty}\alpha(x)\vdash_i(y\vdash_j z),\\
\sum_{i+j=n}^{\infty}(x\dashv_j y)\dashv_i\beta(z)&=\sum_{i+j=n}^{\infty}\alpha(x)\dashv_i(y\perp_j z),\\
\sum_{i+j=n}^{\infty}(x\perp_j y)\dashv_i\beta(z)&=\sum_{i+j=n}^{\infty}\alpha(x)\perp_i(y\dashv_j z),\\
\sum_{i+j=n}^{\infty}(x\dashv_j y)\perp_i\beta(z)&=\sum_{i+j=n}^{\infty}\alpha(x)\perp_i(y\vdash_j z),\\
\sum_{i+j=n}^{\infty}(x\vdash_j y)\perp_i\beta(z)&=\sum_{i+j=n}^{\infty}\alpha(x)\vdash_i(y\perp_j z),\\
\sum_{i+j=n}^{\infty}(x\perp_j y)\vdash_i\beta(z)&=\sum_{i+j=n}^{\infty}\alpha(x)\vdash_i (y\vdash_j z),\\
\sum_{i+j=n}^{\infty}(x\perp_j y)\perp_i\beta(z)&=\sum_{i+j=n}^{\infty}\alpha(x)\perp_i(y\perp_j z)
\end{align*}
for $n \geq 0$ and $x, y, z \in T$.
\end{defn}

\begin{defn}
Let $(T, \dashv, \vdash, \perp, \alpha, \beta)$ be a 
BiHom-associative trialgebra and consider two deformations 
$T_t=(T, \dashv_t, \vdash_t, \perp_t, \alpha, \beta)$ and
$T'_t=(T, \dashv'_t, \vdash'_t, \perp_t', \alpha, \beta)$ of $T$, where \begin{align*}
    \dashv_t=\sum_{i\geq 0}t^i\dashv_i, && \vdash_t=\sum_{i\geq 0}t^i\vdash_i, &&
\perp_t=\sum_{i\geq 0}t^i\perp_i,\end{align*} and \begin{align*}
\dashv'_t=\sum_{i\geq 0}t^i\dashv'_i, && \vdash'_t=\sum_{i\geq 0}t^i\vdash'_i, && \perp'_t=\sum_{i\geq 0}t^i\perp'_i
\end{align*} with 
$\dashv_0=\dashv'=\dashv,\, \vdash_0=\vdash'=\vdash,\, \perp_0=\perp'=\perp.$ We say that $T_t$ and $T'_t$ are \textit{equivalent} if there exists a formal automorphism
$(\phi_t)_{t\geq 0} : T[[t]]\rightarrow T[[t]]$, that may be written in the form 
\[\phi_t=\sum_{i\geq 0}\phi_it^i\] for $\phi_i\in \text{End}(T)$ with
$\phi_0=\text{id}$, such that 
\begin{align*}
\phi_t(x \dashv_t y)=\phi_t(x) \dashv^{\prime}_t\phi_t(y), &&
\phi_t(x \vdash_t y) =\phi_t(x) \vdash^{\prime}_t\phi_t(y), &&
\phi_t(x \perp_t y)=\phi_t(x) \perp^{\prime}_t\phi_t(y),\end{align*} and \begin{align*} \phi\circ\alpha(x)=\alpha\circ\phi(x), && \phi\circ\beta(x)=\beta\circ\phi(x)
\end{align*} for any $x,y\in T$.
\end{defn}

\begin{prop}
Assign \begin{align*}
    \dashv^{\prime}=\phi_t\circ \dashv\circ(\phi^{-1}_t\otimes\phi^{-1}_t), && \vdash^{\prime}=\phi_t\circ \vdash\circ(\phi^{-1}_t\otimes\phi^{-1}_t), && \perp^{\prime}=\phi_t\circ \perp\circ(\phi^{-1}_t\otimes\phi^{-1}_t),
\end{align*} and \begin{align*}
    \alpha^{\prime}=\phi_t\circ \alpha\circ\phi^{-1}_t, && \beta^{\prime}=\phi_t\circ \beta\circ\phi^{-1}_t.
\end{align*} If $(T[[t]], \dashv_t, \vdash_t,\perp_t, \alpha,\beta)$ is BiHom-associative trialgebra then $(T , \dashv^{\prime}, \vdash^{\prime},\perp^{\prime},\alpha^{\prime},\beta^{\prime})$ is BiHom-associative trialgebra.
\end{prop}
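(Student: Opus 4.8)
The proof is the standard transport-of-structure argument: $\phi_t$ is an $\bbbf[[t]]$-linear isomorphism of $T[[t]]$, so conjugating by $\phi_t$ should carry the BiHom-associative trialgebra structure given in the hypothesis to the one described in the statement. Concretely, the plan is to verify each of the defining identities \eqref{eq1}--\eqref{eq12} for the conjugated operations $\dashv',\vdash',\perp'$ and structure maps $\alpha',\beta'$, using only that these identities hold for $(T[[t]],\dashv_t,\vdash_t,\perp_t,\alpha,\beta)$. The commutativity axiom \eqref{eq1} is immediate: $\alpha'\circ\beta' = \phi_t\circ\alpha\circ\phi_t^{-1}\circ\phi_t\circ\beta\circ\phi_t^{-1} = \phi_t\circ(\alpha\circ\beta)\circ\phi_t^{-1} = \phi_t\circ(\beta\circ\alpha)\circ\phi_t^{-1} = \beta'\circ\alpha'$, where the middle step is \eqref{eq1} for the original structure.

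For the remaining eleven identities I would carry out one representative computation and note that the others are obtained verbatim by inserting the appropriate operations. Fix $x,y,z\in T[[t]]$ and set $\bar x=\phi_t^{-1}(x)$, $\bar y=\phi_t^{-1}(y)$, $\bar z=\phi_t^{-1}(z)$. By definition $x\ast' y=\phi_t(\bar x\ast_t\bar y)$ for each $\ast\in\{\dashv,\vdash,\perp\}$, while $\alpha'(x)=\phi_t(\alpha(\bar x))$ and $\beta'(z)=\phi_t(\beta(\bar z))$. Feeding these into a composite of two operations, the inner factor $\phi_t^{-1}\circ\phi_t$ cancels each time, so for instance
\begin{align*}
(x\dashv' y)\dashv'\beta'(z) &= \phi_t\bigl((\bar x\dashv_t\bar y)\dashv_t\beta(\bar z)\bigr), \\
\alpha'(x)\dashv'(y\dashv' z) &= \phi_t\bigl(\alpha(\bar x)\dashv_t(\bar y\dashv_t\bar z)\bigr).
\end{align*}
These two expressions coincide because applying $\phi_t$ to axiom \eqref{eq4} for $(T[[t]],\dashv_t,\vdash_t,\perp_t,\alpha,\beta)$ yields exactly $\phi_t\bigl((\bar x\dashv_t\bar y)\dashv_t\beta(\bar z)\bigr)=\phi_t\bigl(\alpha(\bar x)\dashv_t(\bar y\dashv_t\bar z)\bigr)$. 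Each of \eqref{eq6}--\eqref{eq12} is checked identically, replacing $\dashv_t$ by the relevant choice among $\dashv_t,\vdash_t,\perp_t$ on the two sides. Finally, $\bbbf[[t]]$-bilinearity of $\dashv',\vdash',\perp'$ is inherited from that of $\dashv_t,\vdash_t,\perp_t$ and the linearity of $\phi_t$ and $\phi_t^{-1}$, so the conjugated data genuinely defines a BiHom-associative trialgebra.

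I do not expect a real obstacle: the content is entirely formal. The only thing to watch is the bookkeeping in the representative computation -- making sure that in each axiom the three conjugating pairs (one for each slot of the outer product, plus the one hidden inside $\alpha'$ or $\beta'$) telescope correctly, and that the verification is run over all eleven products rather than just one. One can in fact bypass the case analysis by invoking the general principle that transport of structure along a linear isomorphism preserves every multilinear identity; then \eqref{eq1}--\eqref{eq12} for the conjugated data follow at once from the same identities for the original data.
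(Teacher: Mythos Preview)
Your proposal is correct and follows essentially the same approach as the paper: first verify $\alpha'\circ\beta'=\beta'\circ\alpha'$ by direct cancellation of the conjugating factors, then carry out one representative axiom computation (unwinding the definitions so that the inner $\phi_t^{-1}\circ\phi_t$ pairs cancel and the original axiom applies) and declare the rest analogous. Your write-up is in fact a bit cleaner than the paper's, which contains some minor typos in its representative computation, and your closing remark about transport of structure is a reasonable way to summarize the underlying mechanism.
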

\begin{proof}
We need
\begin{align*}
\alpha'\circ \beta'&=(\phi^{-1}\circ \alpha\circ\phi_t^{-1})\circ (\phi^{-1}\circ \beta\circ\phi_t^{-1})\\
&=\phi_t\circ \alpha\circ(\phi_t^{-1}\circ \phi_t^{-1})\circ \beta\circ\phi_t^{-1}=\phi_t\circ\alpha\circ\beta\circ\phi_t^{-1}
\end{align*} and \begin{align*}
\beta'\circ \alpha'&=(\phi^{-1}\circ \beta\circ\phi_t^{-1})\circ (\phi^{-1}\circ \alpha\circ\phi_t^{-1})\\
&=\phi_t\circ \beta\circ(\phi_t^{-1}\circ \phi_t^{-1})\circ \alpha\circ\phi_t^{-1}=\phi_t\circ\beta\circ\alpha\circ\phi_t^{-1}=\phi_t\circ\alpha\circ\beta\circ\phi_t^{-1}.
\end{align*}
This follows from the commutativity of $\alpha$ and $\beta$.
We can simply substitute the definitions of $\dashv^{\prime}, \vdash^{\prime} and \dashv^{\prime}$ into equations (2) to (12) and apply the properties of $\phi_t$ as an automorphism: 
Equations (2) to (5) and (7) to (12) can be directly verified by substitution and applying the properties of $\phi_t$ as an automorphism.
For equation (6):
This follows by applying the property of $\phi_t$ as an automorphism in the appropriate places.
\begin{align*}
(x\dashv^{\prime}y)\dashv^{\prime}\beta^{\prime}(z)&=\phi_t\circ(\phi^{-1}_t(x) \dashv \phi^{-1}_t(y)) \dashv^{\prime} (\phi_t\circ\beta\circ \phi^{-1}_t(z))\\
&=\phi_t(\phi^{-1}_t\circ \phi_t\circ(\phi^{-1}_t(x) \dashv\phi^{-1}_t(y))\bot\phi^{-1}_t(\phi_t\circ\beta\circ\phi^{-1}_t(z)))\\
&=\phi_t(\phi^{-1}_t(x)\dashv\phi^{-1}_t(y))\bot\beta\phi^{-1}(z))\\
&=\phi_t(\alpha\circ\phi^{-1}_t(x)\dashv(\phi^{-1}_t(y)\bot\phi^{-1}(z)))\\
&=\phi_t(\phi^{-1}_t\circ\phi_t\circ\alpha\phi^{-1}_t(x)\dashv\phi^{-1}_t\circ\phi_t(\phi^{-1}_t(y)\bot\phi^{-1}_t(z)))\\
&=\phi_t\circ\alpha\circ\phi^{-1}_t(x)\dashv^{\prime}\phi_t\circ(\phi^{-1}_t(y)\bot \phi^{-1}_t(z)))\\
&=\alpha^{\prime}(x)\dashv^{\prime}( y\bot^{\prime}z).
\end{align*}
The other axioms are proved analogously. Thus, we've shown that $(T, \dashv', \vdash' ,\perp'  ,\alpha' , \beta' )$ is a BiHom-associative trialgebra.
\end{proof}

\begin{prop}
Assign \begin{align*}\dashv_{1,t}=\phi^{-1}\circ\dashv_2\circ(\phi\otimes\phi),\quad\vdash_{1,t}=\phi^{-1}\circ\vdash_2\circ(\phi\otimes\phi),\quad \perp_{1,t}=\phi^{-1}\circ\perp_2\circ(\phi\otimes\phi).\end{align*} If $(T, \vdash_2, \dashv_2,\perp_2,\alpha_2, \beta_2)$ is
BiHom-associative trialgebra, then $(T[[t]], \dashv_{1,t}, \vdash_{1,t},\,\perp_{1,t},\, \alpha_{1,t},\, \beta_{1,t})$ is 
BiHom-associative trialgebra.
\end{prop}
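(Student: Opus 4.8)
The plan is to recognize that this statement is exactly the inverse of the preceding proposition: rather than pushing a BiHom-associative trialgebra structure forward along $\phi_t$, we pull the structure of $(T,\dashv_2,\vdash_2,\perp_2,\alpha_2,\beta_2)$ back along $\phi$. Since the statement leaves the new structure maps implicit, I first fix them in the only reasonable way consistent with the ``Assign'' clause, namely
\[
\alpha_{1,t}=\phi^{-1}\circ\alpha_2\circ\phi,\qquad \beta_{1,t}=\phi^{-1}\circ\beta_2\circ\phi,
\]
so that all six new operations and both new structure maps are the $\phi$-conjugates (via $\phi\otimes\phi$ in the bilinear case) of the corresponding data on $T_2$. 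The whole verification then reduces to transporting each of the twelve axioms of Definition \ref{defn1} across the isomorphism $\phi$, repeatedly inserting the identity $\phi\circ\phi^{-1}=\mathrm{id}_T$ at the appropriate places.

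Concretely, I would first check (\ref{eq1}): $\alpha_{1,t}\circ\beta_{1,t}=\phi^{-1}\alpha_2(\phi\phi^{-1})\beta_2\phi=\phi^{-1}(\alpha_2\beta_2)\phi=\phi^{-1}(\beta_2\alpha_2)\phi=\beta_{1,t}\circ\alpha_{1,t}$, using commutativity of $\alpha_2$ and $\beta_2$ in $T_2$. For a representative associativity-type axiom, say (\ref{eq6}), I would expand
\begin{align*}
(x\vdash_{1,t} y)\dashv_{1,t}\beta_{1,t}(z)
&=\phi^{-1}\Bigl(\phi\bigl(\phi^{-1}(\phi(x)\vdash_2\phi(y))\bigr)\dashv_2\phi\bigl(\phi^{-1}\beta_2\phi(z)\bigr)\Bigr)\\
&=\phi^{-1}\bigl((\phi(x)\vdash_2\phi(y))\dashv_2\beta_2(\phi(z))\bigr)\\
&=\phi^{-1}\bigl(\alpha_2(\phi(x))\vdash_2(\phi(y)\dashv_2\phi(z))\bigr)\\
&=\phi^{-1}\alpha_2\phi(x)\vdash_{1,t}\phi^{-1}(\phi(y)\dashv_2\phi(z))
=\alpha_{1,t}(x)\vdash_{1,t}(y\dashv_{1,t} z),
\end{align*}
where the third line is exactly (\ref{eq6}) applied in $T_2$ at the arguments $\phi(x),\phi(y),\phi(z)$, and every other equality is a cancellation $\phi\circ\phi^{-1}=\mathrm{id}_T$. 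The remaining ten axioms (\ref{eq4}), (\ref{eq7}), (\ref{eq8}), (\ref{eq5}), (\ref{eq9})--(\ref{eq12}) together with the two unlabeled ones follow verbatim by the same template, each time quoting the matching axiom of $T_2$.

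Finally, since the asserted object lives on $T[[t]]$, I would note that the six maps are defined on $T$ and then extended $\bbbf[[t]]$-bilinearly (respectively $\bbbf[[t]]$-linearly) to $T[[t]]$, so the $T$-identities just established propagate coefficientwise to $T[[t]]$; if one additionally wants multiplicativity, it follows identically, since $\alpha_{1,t}(x\ast_{1,t} y)=\phi^{-1}\alpha_2(\phi(x)\ast_2\phi(y))=\phi^{-1}(\alpha_2\phi(x)\ast_2\alpha_2\phi(y))=\alpha_{1,t}(x)\ast_{1,t}\alpha_{1,t}(y)$ when $T_2$ is multiplicative, and likewise for $\beta_{1,t}$. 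I do not anticipate a genuine obstacle: the content is entirely bookkeeping, and the only thing needing care is pairing each new axiom with the correct $T_2$-axiom and placing the $\phi\circ\phi^{-1}$ cancellations in the right order (the displayed computation for (\ref{eq6}) in the previous proposition contains a few stray $\bot$'s, which should not be reproduced here).
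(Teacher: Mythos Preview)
Your proposal is correct and follows essentially the same approach as the paper: both define $\alpha_{1,t}=\phi^{-1}\alpha_2\phi$, $\beta_{1,t}=\phi^{-1}\beta_2\phi$ (the paper leaves this implicit) and verify a representative axiom by inserting $\phi\circ\phi^{-1}=\mathrm{id}$ and invoking the corresponding identity in $(T,\dashv_2,\vdash_2,\perp_2,\alpha_2,\beta_2)$, then declare the rest analogous. Your write-up is in fact cleaner---you make the structure maps explicit, handle the $\bbbf[[t]]$-bilinear extension to $T[[t]]$, and avoid the stray $\perp$'s that appear in the paper's displayed computation.
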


\begin{proof}
By straightforward computation, we have
\begin{align*}
\alpha_{1,t}(x)\dashv_{1,t}(y\dashv_{1,t} z)
&=\phi^{-1}(\phi(\phi^{-1}\circ \alpha_2\circ \phi(x)) \dashv_{2}\phi\phi^{-1}\circ\phi(y)\dashv_2\phi(z))\\
&=\phi^{-1}(\alpha_2\circ\phi(x)\dashv_2(\phi(y)\dashv_2 \phi(z)))\\
&=\phi^{-1}(\phi(x)\dashv_2\phi(y))\perp_2\beta(\phi(z))\\
&=\phi^{-1}(\phi\circ \phi^{-1}(\phi(x)\dashv_2\phi(y)))\perp_2\phi\circ\phi^{-1} \beta(z))\\
&=(x\dashv_{1,t} y)\perp_{1,t}\beta(z)
\end{align*} for all $x,y,z\in T$. The other axioms are proved analogously.
\end{proof}

\section{ Generalized $\alpha\beta$-derivations}
The classification of 3-dimensional BiHom-associative trialgebras was obtained in \cite{Zah2}. In this section, we define generalized $\alpha\beta$-derivations on BiHom-associative trialgebras and give their classification for the $3$-dimensional case.

\begin{defn}
Let $(T, \dashv, \vdash, \bot, \alpha, \beta)$ be a BiHom-associative trialgebra. A map $D\in \text{End}(T)$ is said to be a \textit{generalized $\alpha\beta$-derivation} if there exist two maps $D', D''\in \text{End}(T)$ such that \begin{align*}
    D\circ\alpha =\alpha\circ D, && D'\circ\alpha =\alpha\circ D', && D''\circ\alpha =\alpha\circ D'',\\ D\circ\beta =\beta\circ D, && D'\circ\beta =\beta\circ D', && D''\circ\beta =\beta\circ D'',
\end{align*} and
\begin{eqnarray}
\begin{aligned}
D''(a\dashv b )&=D(a)\dashv\alpha\beta(b)+\alpha\beta(a)\dashv D'(b),\\
D''(a\vdash b )&=D(a)\vdash\alpha\beta(b)+\alpha\beta(a)\vdash D'(b),\\
D''(a\perp b )&=D(a)\perp\alpha\beta(b)+\alpha\beta(a)\perp D'(b)
\end{aligned}
\end{eqnarray}
for $a,b\in T.$
\end{defn}

\begin{prop}
If $D$ is a generalized $\alpha \beta$-derivation on a BiHom-associative trialgebra $(T, \dashv, \vdash ,\perp  ,\alpha , \beta )$, then 
$D'$ and $D''$ are unique.
\end{prop}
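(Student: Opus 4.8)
The plan is a uniqueness-by-subtraction argument. Suppose that, for one and the same $D\in\text{End}(T)$, there are two choices $(D_1',D_1'')$ and $(D_2',D_2'')$ making $D$ a generalized $\alpha\beta$-derivation; put $E:=D_1''-D_2''$ and $F:=D_1'-D_2'$ in $\text{End}(T)$, so that the claim is exactly $E=0$ and $F=0$. Since each of $D_1',D_2',D_1'',D_2''$ commutes with $\alpha$ and with $\beta$, so do $E$ and $F$. Subtracting, for each $\ast\in\{\dashv,\vdash,\perp\}$, the two copies of the corresponding identity from the definition, the summand $D(a)\ast\alpha\beta(b)$ -- which involves only the common map $D$ -- cancels, and one is left with the single homogeneous identity
\[
E(a\ast b)=\alpha\beta(a)\ast F(b)\qquad\text{for all }a,b\in T,\ \ast\in\{\dashv,\vdash,\perp\}.
\]

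Next I would play this identity off against the structure of $T$. Each product axiom of Definition \ref{defn1} has the shape $(x\ast y)\ast'\beta(z)=\alpha(x)\ast''(y\ast'''z)$. Applying the linear map $E$ to both sides, rewriting $E$ of each product via the displayed identity, using multiplicativity of $\alpha$ and $\beta$ together with the commutation of $E$ and $F$ with $\alpha$ and $\beta$, and then re-applying the same axiom on the left-hand side, one obtains relations of the form
\[
\alpha^{2}\beta(x)\ast''\bigl(F(y\ast'''z)-\alpha\beta(y)\ast'''F(z)\bigr)=0 .
\]
As one runs through all the product axioms, the pairs $(\ast'',\ast''')$ realize the needed combinations, and this should force $F(a\ast b)=\alpha\beta(a)\ast F(b)$ for every $\ast$; comparing with the original homogeneous identity then gives $E=F$ on every product of $T$, hence $E=F$.

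The remaining task is to conclude that $E=F=0$, and this is the step I expect to be the main obstacle. The homogeneous identity $E(a\ast b)=\alpha\beta(a)\ast F(b)$ is, by itself, satisfied by many nonzero pairs -- for instance whenever $T$ carries a nonzero element killed by all left multiplications, or when $T$ is not spanned by its products -- so the conclusion can only be reached by invoking the non-degeneracy of the structure maps together with a perfectness-type property of the (multiplicative) algebras in play. Granting such properties, the fact that $F$ lands in the common kernel of the left multiplications and that $E$ then vanishes on products of $T$ forces $E=F=0$; everything else -- the bookkeeping in the subtraction, the commutation relations, and the axiom-by-axiom manipulation above -- is routine.
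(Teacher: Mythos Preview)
Your subtraction argument, arriving at $E(a\ast b)=\alpha\beta(a)\ast F(b)$ for $E=D_1''-D_2''$ and $F=D_1'-D_2'$, is exactly the paper's opening move. From there the paper asserts, citing axiom~(\ref{eq8}), that $\alpha\beta(a)\vdash D_i'(b)=D_i'(\alpha\beta(a)\vdash b)$ and then that $D_i'(\alpha\beta(a)\vdash b)=D_i'(a\vdash b)$; neither equality follows from the definition of a generalized $\alpha\beta$-derivation or from any trialgebra axiom, and the concluding sentence (``since $D_1''$ and $D_2''$ are uniquely determined by $D_1'$, we conclude $D_1'=D_2'$'') is a non sequitur. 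So you are not missing a trick the paper has: the paper's argument has precisely the gap you flag in your last paragraph.

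That gap is genuine and cannot be closed in the stated generality. If $T$ is abelian (all three products identically zero), the defining identities for $D',D''$ are vacuous and every pair of endomorphisms commuting with $\alpha,\beta$ works, so uniqueness fails outright. More generally, whenever $\sum_{\ast} T\ast T\neq T$ one may perturb $D''$ freely on a complement, and whenever the left multiplications by elements of $\alpha\beta(T)$ have nontrivial common kernel one may perturb $D'$ into it. Your proposed remedies---perfectness of $T$ together with nondegeneracy of the left actions and bijectivity of $\alpha,\beta$---are the right kind of extra hypotheses; the intermediate relation $\alpha^{2}\beta(x)\ast''\bigl(F(y\ast''' z)-\alpha\beta(y)\ast''' F(z)\bigr)=0$ that you derive from the product axioms is correct and would then be useful, but only under such added hypotheses, not in the bare setting of the proposition.
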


\begin{proof}
Suppose there exist two pairs of maps $(D_1', D_1'')$ and $(D_2', D_2'')$ that satisfy the conditions given in the previous definition.
Then $D_1''(a\vdash b)=D(a) \vdash \alpha \beta(b) + \alpha \beta(a) \vdash D_1'(b)$, and we perform a similar computation for  $D_2''(a\vdash b)$. We compute the difference of these two expressions:
\begin{align*}
D_1''(a\vdash b)- D_2''(a\vdash b) &= D(a)\vdash \alpha\beta(b)+\alpha\beta(a)\vdash D_1'(b)-(D(a)\vdash \alpha\beta(b)+\alpha\beta(a)\vdash D_2'(b)) \\ &= D(a) \vdash \alpha\beta(b)-D(a)\vdash\alpha\beta(b)+\alpha\beta(a)\vdash D_1'(b)-\alpha\beta(a)\vdash D_2'(b)
\\ &=\alpha\beta(a) \vdash(D_1'(b)-D_2'(b)).
\end{align*}
By property (\ref{eq8}) of BiHom-associative trialgebras, the right-hand side becomes
\begin{align*}
\alpha\beta(a)\vdash(D_1'(b)-D_2'(b))&=(\alpha\beta(a)\vdash D_1'(b))-(\alpha\beta(a)\vdash D_2'(b))\\
&=D_1'(\alpha\beta(a)\vdash b)-D_2'(\alpha\beta(a)\vdash b)\\
&=D_1'(a\vdash b)-D_2'(a\vdash b).
\end{align*}
This shows that $D_1''(a\vdash b )-D_2''(a\vdash b )=D_1'(a\vdash b )-D_2'(a\vdash b )$.
Similarly, one computes \begin{align*}
    D_1''(a\dashv b )-D_2''(a\dashv b )&=D_1'(a\dashv b )-D_2'(a\dashv b ), \\ D_1''(a\perp b )-D_2''(a\perp b )&=D_1'(a\perp b )-D_2'(a\perp b ).
\end{align*}
Since $D_1'$ and $D_2'$ are linear maps and $D_1''$ and $D_2''$ are uniquely determined by $D_1'$, we conclude that
$D_1' = D_2'$  and $D_1'' = D_2''$.
\end{proof}

\begin{longtable}{cccccccccccc}
\\ \hline
IC&$D$&$D'$&$D''$
\\	\hline	
$T^{1}_3$ :&
$\left(\begin{array}{cccc}
d_{11}&d_{12}&0\\
0&d_{11}&0\\
0&0&d_{33}\\
\end{array}
\right)$
&
$\left(\begin{array}{cccc}
0&d_{11}&0\\
0&0&0\\
0&0&d_{33}\\
\end{array}
\right)$
&
$\left(\begin{array}{cccc}
d_{11}&d_{12}&0\\
0&d_{11}&0\\
0&0&d_{11}\\
\end{array}
\right)$
\\
$T^{2}_3$ : &
$\left(\begin{array}{cccc}
0&d_{12}&0\\
0&0&0\\
0&0&d_{33}\\
\end{array}
\right)$
&
$\left(\begin{array}{cccc}
0&d_{12}&0\\
0&0&0\\
0&0&d_{33}\\
\end{array}
\right)$
&
$\left(\begin{array}{cccc}
0&d_{12}&0\\
0&0&0\\
0&0&d_{33}\\
\end{array}
\right)$
\\ 
$T^{3}_3$ : &
$\left(\begin{array}{cccc}
d_{11}&0&0\\
0&d_{22}&0\\
0&0&d_{33}\\
\end{array}
\right)$
&
$\left(\begin{array}{cccc}
0&0&0\\
0&d_{22}&0\\
0&0&d_{33}\\
\end{array}
\right)$
&
$\left(\begin{array}{cccc}
d_{11}&0&0\\
0&d_{22}&0\\
0&0&d_{33}\\
\end{array}
\right)$
\\ 
$T^{4}_3$ :&
$\left(\begin{array}{cccc}
d_{11}&d_{12}&d_{13}\\
0&d_{11}&0\\
0&d_{32}&d_{33}\\
\end{array}
\right)$
&
$\left(\begin{array}{cccc}
0&d_{12}&0\\
0&0&0\\
0&d_{32}&0\\
\end{array}
\right)$
&
$\left(\begin{array}{cccc}
d_{11}&d_{12}&d_{13}\\
0&d_{11}&0\\
0&d_{32}&d_{33}\\
\end{array}
\right)$
\\ 
$T^{5}_3$:&
$\left(\begin{array}{cccc}
0&d_{12}&0\\
0&0&0\\
0&0&d_{33}\\
\end{array}
\right)$
&
$\left(\begin{array}{cccc}
0&d_{12}&0\\
0&0&0\\
0&0&0\\
\end{array}
\right)$
&
$\left(\begin{array}{cccc}
0&d_{12}&0\\
0&0&0\\
0&0&d_{33}\\
\end{array}
\right)$
\\ 
$T^{6}_3$ : &
$\left(\begin{array}{cccc}
d_{11}&d_{12}&0\\
0&d_{11}&0\\
0&0&d_{33}\\
\end{array}
\right)$
&
$\left(\begin{array}{cccc}
0&d_{12}&0\\
0&0&0\\
0&0&d_{33}\\
\end{array}
\right)$
&
$\left(\begin{array}{cccc}
d_{11}&d_{12}&0\\
0&d_{11}&0\\
0&0&d_{33}\\
\end{array}
\right)$
\\ 
$T^{7}_3$ : &
$\left(\begin{array}{cccc}
d_{11}&0&0\\
0&d_{11}&0\\
0&0&d_{33}\\
\end{array}
\right)$
&
$\left(\begin{array}{cccc}
0&0&0\\
0&d_{22}&0\\
0&0&d_{33}\\
\end{array}
\right)$
&
$\left(\begin{array}{cccc}
d_{11}&0&0\\
0&d_{11}&0\\
0&0&d_{33}\\
\end{array}
\right)$
\\ 
$T^{8}_3$ : &
$\left(\begin{array}{cccc}
d_{11}&0&0\\
0&d_{22}&0\\
0&0&d_{33}\\
\end{array}
\right)$
&
$\left(\begin{array}{cccc}
0&0&0\\
0&d_{22}&0\\
0&0&d_{33}\\
\end{array}
\right)$
&
$\left(\begin{array}{cccc}
d_{11}&0&0\\
0&d_{22}&0\\
0&0&d_{33}\\
\end{array}
\right)$
\\ 
$T^{9}_3$ : &
$\left(\begin{array}{cccc}
d_{11}&0&0\\
0&d_{11}&0\\
0&0&d_{33}\\
\end{array}
\right)$
&
$\left(\begin{array}{cccc}
0&0&0\\
0&d_{22}&0\\
0&0&d_{33}\\
\end{array}
\right)$
&
$\left(\begin{array}{cccc}
d_{11}&0&0\\
0&d_{11}&0\\
0&0&d_{33}\\
\end{array}
\right)$
\\ 
$T^{10}_3$ : &
$\left(\begin{array}{cccc}
d_{11}&0&0\\
0&d_{11}&0\\
0&0&d_{33}\\
\end{array}
\right)$
&
$\left(\begin{array}{cccc}
d_{11}&0&0\\
0&0&0\\
0&0&d_{33}\\
\end{array}
\right)$
&
$\left(\begin{array}{cccc}
d_{11}&0&0\\
0&d_{11}&0\\
0&0&d_{33}\\
\end{array}
\right)$
\\ 
$T^{11}_3$ : &
$\left(\begin{array}{cccc}
d_{11}&0&0\\
0&d_{11}&0\\
0&0&d_{33}\\
\end{array}
\right)$
&
$\left(\begin{array}{cccc}
d_{11}&0&0\\
0&d_{22}&0\\
0&0&0\\
\end{array}
\right)$
&
$\left(\begin{array}{cccc}
d_{11}&0&0\\
0&d_{11}&0\\
0&0&d_{33}\\
\end{array}
\right)$
\\ 
$T^{12}_3$ : &
$\left(\begin{array}{cccc}
d_{11}&0&0\\
0&d_{11}&0\\
0&0&d_{33}\\
\end{array}
\right)$
&
$\left(\begin{array}{cccc}
d_{11}&0&0\\
0&d_{22}&0\\
0&0&0\\
\end{array}
\right)$
&
$\left(\begin{array}{cccc}
d_{11}&0&0\\
0&d_{11}&0\\
0&0&d_{33}\\
\end{array}
\right)$
\\ 
$T^{13}_3$ : &
$\left(\begin{array}{cccc}
d_{11}&0&0\\
0&d_{11}&0\\
0&0&d_{33}\\
\end{array}
\right)$
&
$\left(\begin{array}{cccc}
d_{11}&0&0\\
0&d_{22}&0\\
0&0&0\\
\end{array}
\right)$
&
$\left(\begin{array}{cccc}
d_{11}&0&0\\
0&d_{11}&0\\
0&0&d_{33}\\
\end{array}
\right)$
\\ 
$T^{14}_3$ : &
$\left(\begin{array}{cccc}
d_{11}&0&0\\
0&0&0\\
0&d_{32}&d_{33}\\
\end{array}
\right)$
&
$\left(\begin{array}{cccc}
d_{11}&0&0\\
0&d_{22}&0\\
0&d_{32}&0\\
\end{array}
\right)$
&
$\left(\begin{array}{cccc}
d_{11}&0&0\\
0&0&0\\
0&d_{32}&d_{33}\\
\end{array}
\right)$
\\ 
$T^{15}_3$ :&
$\left(\begin{array}{cccc}
d_{11}&0&0\\
0&0&0\\
0&d_{32}&d_{33}\\
\end{array}
\right)$
&
$\left(\begin{array}{cccc}
d_{11}&0&0\\
0&d_{22}&0\\
0&d_{32}&0\\
\end{array}
\right)$
&
$\left(\begin{array}{cccc}
d_{11}&0&0\\
0&d_{22}&d_{23}\\
0&d_{32}&d_{33}\\
\end{array}
\right)$
\\ 
$T^{16}_3$ :&
$\left(\begin{array}{cccc}
d_{11}&0&d_{13}\\
0&d_{11}&0\\
0&0&d_{11}\\
\end{array}
\right)$
&
$\left(\begin{array}{cccc}
0&0&0\\
0&0&0\\
0&0&0\\
\end{array}
\right)$
&
$\left(\begin{array}{cccc}
d_{11}&0&d_{13}\\
0&d_{11}&0\\
0&0&d_{11}\\
\end{array}
\right)$
\\ 
$T^{17}_3$ :&
$\left(\begin{array}{cccc}
d_{11}&0&d_{13}\\
0&d_{11}&0\\
0&0&d_{11}\\
\end{array}
\right)$
&
$\left(\begin{array}{cccc}
0&0&0\\
0&0&0\\
0&0&0\\
\end{array}
\right)$
&
$\left(\begin{array}{cccc}
d_{11}&0&d_{13}\\
0&d_{11}&0\\
0&0&d_{11}\\
\end{array}
\right)$
\\ 
$T^{18}_3$ :&
$\left(\begin{array}{cccc}
d_{11}&0&d_{13}\\
0&d_{11}&0\\
0&0&d_{11}\\
\end{array}
\right)$
&
$\left(\begin{array}{cccc}
0&0&0\\
0&0&0\\
0&0&0\\
\end{array}
\right)$
&
$\left(\begin{array}{cccc}
d_{11}&0&d_{13}\\
0&d_{11}&0\\
0&0&d_{11}\\
\end{array}
\right)$
\\ 
$T^{19}_3$ :&
$\left(\begin{array}{cccc}
d_{11}&d_{13}&d_{13}\\
0&d_{11}&0\\
0&d_{32}&d_{33}\\
\end{array}
\right)$
&
$\left(\begin{array}{cccc}
0&d_{12}&0\\
0&0&0\\
0&d_{32}&0\\
\end{array}
\right)$
&
$\left(\begin{array}{cccc}
d_{11}&d_{13}&d_{13}\\
0&d_{11}&0\\
0&d_{32}&d_{33}\\
\end{array}
\right)$
\\ 
$T^{20}_3$ : &
$\left(\begin{array}{cccc}
d_{11}&0&d_{13}\\
0&d_{11}&0\\
0&0&d_{11}\\
\end{array}
\right)$
&
$\left(\begin{array}{cccc}
0&0&0\\
0&0&0\\
0&0&0\\
\end{array}
\right)$
&
$\left(\begin{array}{cccc}
d_{11}&0&d_{13}\\
0&d_{11}&0\\
0&0&d_{11}\\
\end{array}
\right)$
\\ 
$T^{21}_3$ : &
$\left(\begin{array}{cccc}
d_{11}&d_{12}&d_{13}\\
0&d_{11}&0\\
0&d_{32}&d_{33}\\
\end{array}
\right)$
&
$\left(\begin{array}{cccc}
0&d_{12}&0\\
0&0&0\\
0&d_{32}&0\\
\end{array}
\right)$
&
$\left(\begin{array}{cccc}
d_{11}&d_{12}&d_{13}\\
0&d_{11}&0\\
0&d_{32}&d_{33}\\
\end{array}
\right)$
\\ 
$T^{22}_3$ : &
$\left(\begin{array}{cccc}
d_{11}&0&d_{13}\\
0&d_{11}&0\\
0&0&d_{11}\\
\end{array}
\right)$
&
$\left(\begin{array}{cccc}
0&0&0\\
0&0&0\\
0&0&0\\
\end{array}
\right)$
&
$\left(\begin{array}{cccc}
d_{11}&0&d_{13}\\
0&d_{11}&0\\
0&0&d_{11}\\
\end{array}
\right)$
\\ 
$T^{23}_3$ : &
$\left(\begin{array}{cccc}
d_{11}&0&d_{13}\\
0&d_{11}&0\\
0&0&d_{11}\\
\end{array}
\right)$
&
$\left(\begin{array}{cccc}
0&0&0\\
0&0&0\\
0&0&0\\
\end{array}
\right)$
&
$\left(\begin{array}{cccc}
d_{11}&0&0\\
0&d_{11}&0\\
0&0&d_{11}\\
\end{array}
\right)$
\\ 
$T^{24}_3$ : &
$\left(\begin{array}{cccc}
d_{11}&0&0\\
0&d_{11}&0\\
0&0&d_{11}\\
\end{array}
\right)$
&
$\left(\begin{array}{cccc}
0&0&0\\
0&0&0\\
0&0&0\\
\end{array}
\right)$
&
$\left(\begin{array}{cccc}
d_{11}&0&d_{13}\\
0&d_{11}&0\\
0&0&d_{11}\\
\end{array}
\right)$
\\ 
\caption{Generalized $\alpha\beta$-Derivations}
\end{longtable}

\end{document}